\documentclass[11pt]{amsart}
\usepackage[latin1]{inputenc}
\usepackage{amssymb}
\usepackage{enumerate}
\usepackage[english]{babel}

\usepackage[T1]{fontenc}
\usepackage{mathptmx}
\usepackage{amsthm}
\usepackage[colorlinks, bookmarks=true]{hyperref}
\usepackage{cleveref}

\newtheorem{theorem}{Theorem}[section]
\newtheorem{proposition}[theorem]{Proposition}
\newtheorem{lemma}[theorem]{Lemma}

\theoremstyle{definition}
\newtheorem{remark}[theorem]{Remark}
\newtheorem{definition}[theorem]{Definition}

\numberwithin{equation}{section}

\crefname{theorem}{theorem}{theorems}
\Crefname{theorem}{Theorem}{Theorems}

\crefname{lemma}{lemma}{lemmas}
\Crefname{lemma}{Lemma}{Lemmas}

\crefname{proposition}{proposition}{propositions}
\Crefname{proposition}{Proposition}{Propositions}

\crefname{corollary}{corollary}{corollaries}
\Crefname{corollary}{Corollary}{Corollaries}

\newcommand{\unit}{\mathbf{1}}

\begin{document}
	
\title[The Quasi-linearity problem for Jordan Banach algebras: a topological characterization]{The Quasi-linearity problem for Jordan Banach algebras: a topological characterization}

\author[G.M. Escolano]{Gerardo M. Escolano}

\address{Departamento de An{\'a}lisis Matem{\'a}tico, Facultad de Ciencias, Universidad de Granada \hyphenation{Gra-nada}, 18071 Granada, Spain.}
\email{gemares@ugr.es}

\subjclass[2010]{Primary 46L70; Secondary 46L05; 46L10; 46L30; 46H05}

\keywords{Jordan--Banach algebra; JBW$^*$-algebra; quasi-linear functionals; Mackey-Gleason problem }

\begin{abstract} 
	Let $\mathfrak{J}$ be a JB$^*$-algebra with no quotients isomorphic to $S_2(\mathbb{C})$. Let $\mu$ be a local quasi-linear Jordan functional on $\mathfrak{J}_{sa}$. We show that $\mu$ is a linear functional on $\mathfrak{J}_{sa}$ if and only if the restriction of $\mu$ to the closed unit ball of $\mathfrak{J}_{sa}$ is uniformly weakly continuous.
\end{abstract}

\maketitle

\section{Introduction}

The problem of quasi-linear mappings on C$^*$-algebras and von Neumann algebras is a longstanding question in functional analysis. J. F. Aarnes first considered this problem in \cite{Aarnes1969, Aarnes1970}, acknowledging R. V. Kadison for bringing the question to his attention, as a starting point to study whether a \emph{physical state} on a unital C$^*$-algebra $A$, i.e., a complex functional that acts as a positive linear functional of norm one on each singly generated C$^*$-subalgebra, is necessarily linear on $A$. In its proper physical interpretation, this corresponds to the problem of the expectation functional on the algebra of observables in quantum mechanics, c.f. Mackey \cite{Mackey1957}. In \cite{Aarnes1969, Aarnes1970} two facts are noted: the first one that the answer is no, even for abelian C$^*$-algebras ; the second one is that the problem was a special case of the famous Mackey-Gleason problem for measures on the lattice of projections of a von Neumann algebra \cite{Gleason1957}. \smallskip

He then reformulated the question so that it remained meaningful in the physical context of observables, represented by self-adjoint elements of $A$, by defining a \emph{quasi-state} $\rho$ to be a physical state with the further property that $\rho(a + ib) = \rho(a) + i\rho(b)$ for every $a, b \in A_{sa}$, the set of self-adjoints elements of $A$. He then proved then that any quasi-state is linear when $A$ is abelian and that, under the extra assumption that $\rho$ is continuous on $A$, $\rho$ is linear for a quite large classes of C$^*$-algebras.  

This problem was closely related to the famous Mackey-Gleason problem for von Neumann algebras, since it is a special case of this linearity problem for physical states. 
An outstanding result by L. J. Bunce and J. D. M. Wright in \cite{BunceWright1992} and \cite{BunceWright1994} shows a positive solution to the Mackey-Gleason problem of signed measure on the lattice of a von Neumann algebra without central summands of type $I_2$. Moreover, in \cite{BunceWright1996} the authors try to apply the new tools developed for this particular case in order to obtain a positive solution to the problem of quasi-linear functionals on C$^*$-algebras.

The foundational works \cite{Jordan1933} and \cite{JorvNeuWign34} of Jordan, von Neumann, and Wigner suggest that the most general mathematical framework for quantum mechanics is not based on associative algebras, but rather on \emph{Jordan algebras}. These structures arise naturally from associative algebras through the symmetrization of the underlying associative product.  

The aim of this paper is to study quasi-linear functionals in the broader setting of JB$^*$-algebras, which can be regarded as the Jordan analogues of C$^*$-algebras. Motivated by recent results on the Mackey-Gleason-Bunce-Wright problem obtained in \cite{EscolanoPeraltaVillena2025}, it is natural to address the problem of quasi-linearity within the Jordan framework. More precisely, our goal is to provide a topological characterization of quasi-linear functionals in this setting, in the spirit of \cite{BunceWright1996}.\smallskip 

The first section of this paper is devoted to introducing the basic notions and properties of Jordan algebras that will be needed in the sequel. In the second section, we present the main result of this present work, Theorem \ref{Theo: 2.3}.

\subsection{Background }\label{subsec:background} \ \\

This section is entirely devoted to recall some structure theory of JB$^*$-algebras and JBW$^*$-algebras. We begin by recalling that a (real or complex) \emph{Jordan algebra} is a (real or complex) linear space $\mathfrak{J}$ equipped with a bilinear mapping $(a,b) \rightarrow a\circ b$ (called the Jordan product) satisfying that 
\begin{itemize}
    \item [(J1)] $a\circ b = b\circ a$, for all $a,b \in \mathfrak{J}$;
    \item [(J2)] $a^2\circ (b \circ a) = (a^2\circ b)\circ a$ for all $a,b \in \mathfrak{J}$ (Jordan identity).
\end{itemize}
Moreover, if $\mathfrak{J}$ is a (real or complex) Banach space satisfying 
\begin{itemize}
    \item [(J3)] $\|a\circ b\| \leq \|b\| \|a\|$, for all $a,b \in \mathfrak{J}$,
\end{itemize}
$\mathfrak{J}$ is called a (real or complex) \emph{Jordan Banach algebra}. A Jordan algebra $\mathfrak{J}$ is called unital if there exists an element $\unit \in \mathfrak{J}$ (called the
\emph{unit} of $\mathfrak{J}$) such that $\unit \circ a = a$ for all $a \in \mathfrak{J}$.\smallskip 

There are two closely related types of Jordan-Banach algebras, which are defined by algebraic-geometric axioms, and are known as JB-algebras and JB$^*$-algebras. A JB-algebra is a real Jordan-Banach algebra $\mathfrak{J}$ in which the norm satisfies the following two additional conditions:

\begin{itemize}
    \item[(JB1)] $\|a^2\| = \|a\|^2$ for all $a \in \mathfrak{J}$;
    \item[(JB2)] $\|a^2\| \leq \|a^2 + b^2\|$ for all $a,b \in \mathfrak{J}$.
\end{itemize}

The Jordan analogue to C$^*$-algebras is constituted by JB$^*$-algebras, a model introduced by I. Kaplansky in 1976. A complex Jordan-Banach algebra $\mathfrak{J}$ equipped
with an involution $*$ is said to be a JB$^*$-algebra if the following axiom is satisfied:

\begin{itemize}
    \item[(JB*1)] $\|2a\circ (a\circ a^*) - a^2\circ a^*\| = \|a\|^3$ for all $a \in \mathfrak{J}$.
\end{itemize}

JB- and JB$^*$-algebras are mutually linked in the following way: the set $\mathfrak{J}_{\mathrm{sa}}$ of all self-adjoint elements in a JB$^*$-algebra $\mathfrak{J}$, i.e. $\mathfrak{J}_{sa} := \{ a \in \mathfrak{J} : a^* = a \}$,
is a JB-algebra (see \cite[Proposition 3.8.2]{HOS}). Conversely, by a deep result due to J.\,D.\,M. Wright \cite{Wright1977}, each JB-algebra corresponds to the self-adjoint part of a (unique) JB$^*$-algebra.\smallskip

A JBW$^*$-algebra (resp., a JBW-algebra) is a JB$^*$-algebra (resp., a JB-algebra) which is also a dual Banach space. Each JBW$^*$-algebra admits a unique (isometric) predual (cf. \cite[Theorem 4.4.16]{HOS} or \cite[Theorem 2.55]{AlfsenShultz2003}). Thus JBW$^*$-algebras can be considered as the Jordan analogue of von Neumann algebras. It is known that a JB$^*$-algebra $\mathfrak{J}$ is a JBW$^*$-algebra if and only if $\mathfrak{J}_{\mathrm{sa}}$ is a JBW-algebra \cite{Edwards1980}. 

In \cite[\S 2]{AlfsenShultz2003} and \cite[Lemma 2.3]{Edwards1980}, it is showed that the second dual of any JB$^*$-algebra $\mathfrak{J}$, denoted by $\mathfrak{J}^{**}$,  has the structure of a JBW$^*$-algebra under the Arens product (see \cite{Edwards1980} for the definition).

For further information on the classical theory of Jordan--Banach algebras, the reader is referred to \cite{AlfsenShultz2003}, \cite{EscolanoPeraltaVillena2025} and \cite{HOS}.

\section{Quasi-linear functionals on JB*-algebras}

As it was said previously, \emph{positive quasi-linear functionals} were originally introduced by J. F. Aarnes to study the linearity of a physical state on an arbitrary C$^*$-algebra. Our starting point then is the definition of what a quasi-linear functional on the setting of JB$^*$-algebras is.  

\begin{definition}\label{def: quasi_lin}
    Let $\mathfrak{J}$ be a JB$^*$-algebra where $J$ stands for the self-adjoint part of the algebra. A \emph{ quasi-linear Jordan functional} on $J$ is a mapping $\mu : J \rightarrow \mathbb{R}$ such that, whenever $B$ is an associative Jordan subalgebra of $\mu$, the restriction of $\mu$ to $B$ is linear. Furthermore, $\mu$ is bounded on the closed unit ball of $J$. 
\end{definition}

Observe now that given any quasi-linear Jordan functional $\mu$ on $J$, we can define a functional $\overline{\mu}: \mathfrak{J} \rightarrow \mathbb{C}$ such that $\overline{\mu}(x +iy) = \mu(x) + i \mu(y)$ whenever $x, y \in J$. Then, $\overline{\mu}$ will be linear on each maximal associative Jordan *-subalgebra of $\mathfrak{J}$. Nevertheless, under the hypothesis of the Definition \ref{def: quasi_lin}, this extension is not unique, there could exist $\nu: \mathfrak{J} \rightarrow \mathbb{C}$ such that $\nu$ is linear on each maximal associative Jordan *-subalgebra of $\mathfrak{J}$, $\nu$ agrees with $\mu$ in $J$, but in general $\nu  \neq \overline{\mu}$ in $\mathfrak{J}$. We will see that, under the hypothesis of \Cref{Theo: 2.3}, the uniqueness of the extension it is recovered.

\begin{remark}

It is well known that $M_2(\mathbb{C})$, regarded as a unital $C^*$-algebra, exhibits pathological behaviour with respect to quasi-linear functionals: there exist quasi-linear functionals on $M_2(\mathbb{C})$ which fail to be linear \cite{BunceWright1996}.

Note that $M_2(\mathbb{C})$ itself is not a JB-algebra, but its self-adjoint part $M_2(\mathbb{C})_{sa}$, equipped with the \emph{natural Jordan product} $a \circ b = \frac{1}{2}(ab+ba)$, is a canonical example of a JB-algebra. In this setting, the same obstruction is already present. Indeed same  counterexamples work here. 

This behaviour is not restricted to finite-dimensional cases. In the broader framework of JBW$^*$-algebras, the same structural phenomenon persists: the existence of quasi-linear functionals that are not linear is precisely linked to the presence of direct summands of type $I_2$.
Conversely, as shown in \cite{EscolanoPeraltaVillena2025} in the context of the MGBW-problem, if a JBW$^*$-algebra $\mathfrak{J}$ contains no direct summand of type $I_2$, then every quasi-linear functional on $\mathfrak{J}$ is automatically linear.
\end{remark}

For the purpose of the paper, we now present a weaker definition of quasi-linearity. 

\begin{definition}
Under the same hypothesis in  Definition \ref{def: quasi_lin}, a \emph{local quasi-linear Jordan functional} is a function $\mu : J \rightarrow \mathbb{R}$ such that, for each $x \in J$, $\mu$ is linear on the smallest norm-closed Jordan subalgebra of $J$ containing $x$. Furthermore, $\mu$ is bounded on the closed unit ball of $J$.
\end{definition}

It is clear that each quasi-linear Jordan functional on $J$ is a local quasi-linear Jordan functional, although the converse is false, even in the associative case. The counterexample given by J. F. Aarnes in \cite{Aarnes1991} shows that there exist an abundance of non-linear quasi-functional on $C(X)$, where $X$ is some compact Hausdorff space. The construction remains valid in the Jordan setting since $C(X)$, endowed with its natural Jordan product, is a JB-algebra where every associative Jordan subalgebra here coincides with the commutative subalgebras of $C(X)$.  

For this same reason, the result by C. A. Akemann and S. M. Newberger in \cite{AkemannNew1973} also holds for associative JB-algebras, proving that every local  quasi-linear Jordan functional on an associative JB-algebras is a quasi-linear Jordan functional. \smallskip

We now present the technical core of the present paper. 

\begin{proposition}\label{Prop: mu_bar_extension}
   Let $\mathfrak{J}$ be a JB$^*$-algebra, $\mu: J \rightarrow \mathbb{R}$ a local quasi-linear Jordan functional, and $M = J^{**}$ the bidual of $\mathfrak{J}$. Assume additionally that $\mu$, when restricted to the closed unit ball of $J$, is uniformly continuous with respect to the weak topology $\sigma(J, J^*)$. Then the following holds:
\begin{enumerate}[$(a)$]
    \item there exists a unique extension of $\mu$, $\overline{\mu}: M \rightarrow \mathbb{R}$, which is homogeneous, bounded and uniformly weak$^*$-continuous when restricted to the closed unit ball of $M$;
    \item $\overline{\mu}$ is a local quasi-linear Jordan functional on $M$.
\end{enumerate}
\end{proposition}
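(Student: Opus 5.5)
The extension will be built by Goldstine density together with the completion of a uniform space, and local linearity will then be transferred to $M$ by a Kaplansky-type density theorem for the abelian subalgebras generated by single elements. Write $J_1$ and $M_1$ for the closed unit balls. By Goldstine's theorem, $J_1$ is $\sigma(M,J^*)$-dense in $M_1$, and the weak topology $\sigma(J,J^*)$ on $J_1$ is the restriction of the weak$^*$ topology of $M$. Moreover $M_1$ is weak$^*$-compact, so it is the completion of $J_1$ for the uniformity induced by $\sigma(J,J^*)$, the unique compatible uniformity on the compact space $M_1$. Since $\mu|_{J_1}$ is uniformly continuous and bounded, hence takes values in a complete space, it extends uniquely to a uniformly weak$^*$-continuous $\overline{\mu}:M_1\to\mathbb{R}$ with $\sup_{M_1}|\overline{\mu}|=\sup_{J_1}|\mu|<\infty$. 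Outside the ball we set $\overline{\mu}(x)=\|x\|\,\overline{\mu}(x/\|x\|)$.

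For homogeneity we first note that $\mu$ is homogeneous on $J$, because $\mu$ is linear on the subalgebra generated by $x$, which contains $tx$. For $x\in M_1$ and $|t|\le 1$ we choose a net $x_\alpha\to x$ in $J_1$; then $tx_\alpha\to tx$ weak$^*$, so $\overline{\mu}(tx)=\lim\mu(tx_\alpha)=t\,\overline{\mu}(x)$. This makes the radial definition consistent with the extension on the ball and gives homogeneity on all of $M$. Uniqueness holds because any homogeneous extension that is uniformly weak$^*$-continuous on $M_1$ agrees with $\overline{\mu}$ on the dense set $J_1$, hence on $M_1$, and then on $M$ by homogeneity. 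This proves $(a)$.

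For $(b)$, fix $x\in M_{sa}$ with $\|x\|\le 1$ and let $C$ be the norm-closed Jordan subalgebra generated by $x$. $C$ is associative, so it is isometric to some $C_0(X)_{\mathbb{R}}$. We must show that $\overline{\mu}$ is additive on $C$; it suffices to treat pairs $a=f(x)$, $b=g(x)$ in the ball, since homogeneity handles scaling. The strategy is to approximate $x$ by elements of $J$ so that polynomial (or continuous) functions of the approximants converge to the corresponding functions of $x$. This requires a Kaplansky density theorem for JB-algebras: $J_1$ is $\sigma$-strong$^*$ dense in $M_1$, and Jordan products and continuous functional calculus are jointly strongly continuous on bounded sets (cf. \cite{AlfsenShultz2003}, \cite{HOS}). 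Then for a net $x_\alpha\to x$ strongly in $J_1$ we get $f(x_\alpha)\to f(x)$, $g(x_\alpha)\to g(x)$ and $(f+g)(x_\alpha)\to(f+g)(x)$ strongly, hence weak$^*$, with all terms bounded. Since $f(x_\alpha)$ and $g(x_\alpha)$ lie in the subalgebra generated by $x_\alpha$, local linearity of $\mu$ gives $\mu((f+g)(x_\alpha))=\mu(f(x_\alpha))+\mu(g(x_\alpha))$. Passing to the limit using weak$^*$ continuity on a suitable multiple of the ball, and rescaling by homogeneity, yields $\overline{\mu}(a+b)=\overline{\mu}(a)+\overline{\mu}(b)$. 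Density of polynomials in $C$, together with norm continuity of $\overline{\mu}$ on the ball (norm convergence implies weak$^*$ convergence), extends additivity to all of $C$. Boundedness on $M_1$ was already obtained in $(a)$.

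The main obstacle is this Kaplansky-type step. The uniform continuity hypothesis only concerns $\sigma(J,J^*)$, and functional calculus is \emph{not} weak$^*$ continuous, so we must use strong convergence, which implies weak$^*$ convergence, to transfer the identities. If a Jordan Kaplansky theorem with strong continuity of the functional calculus is unavailable in the cited references, the first fallback is to embed $M$ as a JW-type algebra via its special part. Otherwise we reduce to the case where $x$ has finite spectrum, i.e.\ is a finite combination of orthogonal projections. We then approximate such $x$ using weak$^*$-continuous truncations and the norm density of finite-spectrum elements in the JBW-algebra $M$.
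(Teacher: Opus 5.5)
Your proposal is correct and follows essentially the paper's route. For $(a)$ you use Goldstine density, extend the uniformly continuous map to the weak$^*$-completion $M_1$, and then extend radially; your check that $\overline{\mu}(tx)=t\,\overline{\mu}(x)$ for $|t|\le 1$ on $M_1$ makes explicit why the radial definition agrees with the extension on the ball, a point the paper passes over. For $(b)$ you follow the same steps as the paper: strong approximation of $x$ by a bounded net from $J_1$, joint strong continuity of the Jordan product on bounded sets, local linearity of $\mu$ on polynomials in the approximants, and norm density of polynomials in the subalgebra generated by $x$.

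The Kaplansky-type step you flag as the main obstacle is not a real one, and the paper settles it in one line. $J_1$ is convex and weak$^*$-dense in $M_1$ by Goldstine, and the strong and weak$^*$ topologies have the same continuous linear functionals. By the bipolar theorem, $J_1$ is therefore strongly dense in $M_1$, so your fallbacks are unnecessary.
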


\begin{proof}
    \noindent $(a)$: By Goldstine's theorem, the closed unit ball $J_1$ is weak-* dense in the closed unit ball $M_1$. Hence, for any $x \in M_1$, there exists a net $\{a_{\lambda}\} \subset J_1$ that converges to $x$ in the weak-* topology. Now observe that, by construction, the restriction of the weak-* topology of $M$ to the subspace $J$ coincides with the weak topology on $J$. Thus, $\{a_{\lambda}\}$ is a Cauchy net in the weak topology of $J$.\smallskip
    
    By hypothesis, the restriction of $\mu$ to $J_1$ is uniformly weakly continuous. Therefore, $\{\mu(a_{\lambda})\}$ is a Cauchy net in $\mathbb{R}$, and hence its limit exists and is well defined. Now define the map $\overline{\mu}_1: M_1 \rightarrow \mathbb{R}$ by $\overline{\mu}_1(x) = \lim_{\lambda} \mu(a_{\lambda})$. Since $\mu$ is uniformly weakly continuous, this mapping is well defined. Furthermore, by classical topology (see \cite{Kelley1953}), $\overline{\mu}_1$ is unique and uniformly weak-* continuous on $M_1$. Moreover, since $\mu$ is bounded on $J_1$, $\overline{\mu}_1$ is bounded on $M_1$.\smallskip 

To extend $\mu$ to $M$, we define 
$$\bar{\mu}(x) = \begin{cases}
       0, & \text{if } x = 0, \\ 
       \|x\|\bar{\mu}_1\left( \frac{x}{\|x\|} \right), & \text{if } x \neq 0.
       \end{cases}
$$
By hypothesis, $\mu$ is a local quasi-linear Jordan mapping on $J$; in particular, $\mu(ta) = t\mu(a)$ for every $a \in J$ and $t \in \mathbb{R}$. 

Then, for any nonzero element $x \in M$ and $t \in \mathbb{R}^*$, we have 
\begin{equation}\label{eq: mu_ext}
    \overline{\mu}(tx) = \| tx\| \overline{\mu}_1 \left( \frac{tx}{\|tx\|}\right) = |t|\|x\| \overline{\mu}_1 \left( \operatorname{sgn}(t) \frac{x}{\|x\|}\right).
\end{equation}

Now, since $x/\|x\| \in M_1$, there exists a net $\{a_{\lambda}\} \subset J_1$ that converges in the weak-* topology to $x/\|x\|$. Since scalar multiplication is weak-* continuous, $\{\operatorname{sgn}(t) a_{\lambda} \}$ converges in the weak-* topology to $\operatorname{sgn}(t)x/ \|x\|$. Thus, by the definition of $\overline{\mu}_1$, we obtain 
$$ \overline{\mu}_1\left (\operatorname{sgn}(t) \frac{x}{\|x\|} \right) = \lim_{\lambda} \mu \left(\operatorname{sgn}(t) a_{\lambda} \right) = \operatorname{sgn}(t)\lim_{\lambda} \mu \left( a_{\lambda} \right) = \operatorname{sgn}(t)\overline{\mu}_1\left ( \frac{x}{\|x\|} \right).$$

Substituting this identity into \eqref{eq: mu_ext}, we obtain 
$$ \overline{\mu}(tx) = |t|\|x\| \overline{\mu}_1 \left( \operatorname{sgn}(t) \frac{x}{\|x\|}\right) = t\|x\|\overline{\mu}_1 \left(\frac{x}{\|x\|}\right) = t\bar{\mu}(x).$$

\noindent $(b)$: by part $(a)$ we already have that $\overline{\mu}: M \rightarrow \mathbb{R}$ is bounded when restricted to $M_1$. We shall now prove that is linear on every norm closed Jordan-subalgebra generated by a single element in $M$, i.e. for every $x \in M$, $\overline{\mu}$ is linear when restricted to $J(x)$, the norm-closed Jordan real subalgebra generated by $x$. Since $\overline{\mu}$ is homogeneous, it is enough to prove it for every $x \in M_1$.\smallskip

First observe that by the Bipolar theorem, the weak-* closure of any convex subset $K$ of $M$ coincides with the strong closure of $K$ in $M$. Since $J_1$ is convex and weakly-* dense in $M_1$ by Goldstine, we get that $J_1$ is strongly dense in $M_1$.

Now, take any element $x \in M_1$. There exists a net $\{a_{\lambda}\} \subset J_1$ that strongly converges to $x$. By \cite[4.1.9]{HOS}, the Jordan multiplication is jointly strongly continuous on bounded sets of a JBW-algebra, so that for any $k \in \mathbb{N}$ we get that $\{a_{\lambda}^k\}$ strongly converges to $x^k$. Now let us take $\phi_1$ and $\phi_2$ polynomials with real coefficients and zero constant term. Then we have that  the $\phi_1(a_{\lambda}) \rightarrow \phi_1(x)$, $\phi_2(a_{\lambda}) \rightarrow \phi_2(x)$ in the strong topology, and hence
$(\phi_1 + \phi_2)(a_{\lambda}) \rightarrow (\phi_1 + \phi_2)(x)$ in the strong topology. 

By $(a)$, $\overline{\mu}$ when restricted to $M_1$ is weakly-* continuous. Then, since the strong topology is finer that the weak-* topology, $\overline{\mu}$ is strongly continuous in $M_1$ and hence $\lim_{\lambda}{\mu}(a_{\lambda}^k) = \bar{\mu}(x^k)$. In particular, since $\mu$ is a local quasi-linear Jordan functional,
$$\mu (\phi_1(a_{\lambda})) + \mu(\phi_2(a_{\lambda})) = \mu((\phi_1 + \phi_2)(a_{\lambda}),$$
so that
$$ \overline{\mu}(\phi_1(x)) + \overline{\mu}(\phi_2(x)) = \overline{\mu}(\phi_1(x) + \phi_2(x)).$$

Now consider $J(x)$, the Jordan real subalgebra generated by $x$ in $M$. By construction of $J(x)$, it is the norm-closure of the set of all elements of the form $\phi(x)$, where $\phi$ is a polynomial with real coefficients and zero constant term. Then, since each norm convergent sequence is bounded and strongly convergent, $\overline{\mu}$ is linear in $J(x)$ for every $x \in M_1$, as we wanted to prove. 
\end{proof} 

Our purpose now shall be to apply the MGBW theorem in \cite{EscolanoPeraltaVillena2025} when possible on the second dual of the JB-algebra we are studying. To this end, we should be sure in which case the bidual of $J$ has no type $I_2$ direct summands. Next lemma solves this issue. 

\begin{lemma}\label{lemma: no_isom_S2}
    Let $\mathfrak{J}$ be a JB$^*$-algebra. If $\mathfrak{J}$ has no quotient isomorphic to $S_2(\mathbb{C})$, then the JBW$^*$-algebra $ \mathfrak{M} =\mathfrak{J}^{**}$ has no direct summands of type $I_2$. 
\end{lemma}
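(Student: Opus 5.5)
We argue by contraposition: assuming $\mathfrak{M}=\mathfrak{J}^{**}$ has a nonzero direct summand of type $I_2$, we will build a quotient of $\mathfrak{J}$ isomorphic to $S_2(\mathbb{C})$. Write $\mathfrak{M}=\mathfrak{M}z\oplus\mathfrak{M}(\unit-z)$ with $z$ a nonzero central projection and $\mathfrak{M}z$ of type $I_2$. By the structure theory of JBW$^*$-algebras of type $I_2$ (spin factors), $\mathfrak{M}z$ is (a direct sum of algebras of the form) $C(\Omega, V)$-like continuous sections with values in spin factors, and in particular it admits a weak$^*$-continuous factor representation: there is a minimal-type central decomposition from which we extract a weak$^*$-closed ideal $\mathfrak{M}w$, $w\le z$ central, and a Jordan $^*$-homomorphism $\pi:\mathfrak{M}z\to V$ onto a spin factor $V$ (of type $I_2$). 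The first step is therefore to fix a \emph{factor} quotient: choose a pure state (or an extreme point of the normal state space supported in $z$) and the associated irreducible representation, obtaining a surjective Jordan $^*$-homomorphism $\pi:\mathfrak{M}\to V$ which vanishes on $\mathfrak{M}(\unit-z)$, where $V$ is a spin factor.

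The second step is to restrict to $\mathfrak{J}$. Since $\mathfrak{J}$ is weak$^*$-dense in $\mathfrak{M}$ and $\pi$ can be chosen weak$^*$-continuous (if we take the factor representation through a normal functional), $\pi(\mathfrak{J})$ is a weak$^*$-dense JB$^*$-subalgebra of $V$; being the image of a JB$^*$-algebra under a Jordan $^*$-homomorphism it is norm closed, hence $\pi|_{\mathfrak{J}}$ gives a quotient $\mathfrak{J}/\ker(\pi|_{\mathfrak{J}})\cong \pi(\mathfrak{J})\subseteq V$. In a spin factor, a norm-closed weak$^*$-dense Jordan $^*$-subalgebra containing two orthogonal minimal... is again a spin factor (spin factors are reflexive when finite dimensional, and in general the closed subalgebras that act irreducibly are spin subfactors), so $\pi(\mathfrak{J})$ is itself a spin factor $V'$, nonzero since $\pi(\unit_{\mathfrak{M}})\neq 0$ and $\mathfrak{J}$ is weak$^*$-dense.

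The final step, which I expect to be the main obstacle, is to go from an arbitrary spin-factor quotient to one isomorphic to $S_2(\mathbb{C})$, i.e.\ to match the exact hypothesis. Here I would use that every spin factor $V'$ of dimension $\ge 3$ contains, and maps in a controlled way onto... in fact it is not clear one can; the cleaner route is to interpret the hypothesis ``no quotient isomorphic to $S_2(\mathbb{C})$'' as in the Bunce--Wright associative setting, namely no Jordan $^*$-homomorphism onto a type $I_2$ factor, and then to reduce to the case where the irreducible representation's image is $S_2(\mathbb{C})$ by choosing the pure state so that the image has rank $2$ and dimension $3$ over the self-adjoint part: I would try first to show that in a type $I_2$ JBW$^*$-algebra the weak$^*$-continuous factor representations coming from pure normal states have images equal to $H_2$-type factors, using the Peirce decomposition relative to a pair of orthogonal abelian projections $p,\unit-p$ with $p\sim \unit-p$ and checking that $\{p\,\mathfrak{M}\,(\unit-p)\}$ is one-dimensional over the centre in the relevant quotient. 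If this fails in infinite dimensions, I would fall back on showing the lemma with $S_2(\mathbb{C})$ replaced by ``a spin factor'', which is what the subsequent application of the MGBW theorem actually needs.
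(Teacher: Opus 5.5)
The step you flag at the end cannot be carried out, so your proposal does not prove the lemma as stated. In fact the statement itself is false.

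A JBW$^*$-factor of type $I_2$ is a spin factor, and spin factors of every dimension $\geq 3$ (finite or infinite) are of type $I_2$. Your proposed check, that the Peirce-$\frac12$ space of an abelian projection is one-dimensional over the centre, already fails for $M_2(\mathbb{C})$ with $a\circ b=\frac12(ab+ba)$. For $p=e_{11}$ its self-adjoint part is the two-dimensional real space of self-adjoint off-diagonal matrices. A finite-dimensional complex spin factor is simple, is its own bidual, and is of type $I_2$. Hence any spin factor not isomorphic to $S_2(\mathbb{C})$ has no quotient isomorphic to $S_2(\mathbb{C})$, although its bidual has a type $I_2$ summand. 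For instance, take the $4$-dimensional $M_2(\mathbb{C})$, if $S_2(\mathbb{C})$ denotes the $3$-dimensional algebra of symmetric $2\times 2$ matrices.

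So your fallback, with $S_2(\mathbb{C})$ replaced by ``a spin factor'', is the right statement. It is also all that the later application of the Mackey--Gleason--Bunce--Wright theorem needs. The paper's proof does not avoid this problem. It asserts ``by structure theory'' a central projection $z$ with $z\circ\mathfrak{M}\cong S_2(\mathbb{C})$, and then runs your Step 2 in finite dimensions: $\pi(\mathfrak{J})$ is weak$^*$-dense in a finite-dimensional space, hence equal to it. In the C$^*$-setting the analogous lemma works because $M_2(\mathbb{C})$ is the only type $I_2$ factor; in the Jordan setting there are many.

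Even for the spin-factor version, your Step 1 has a gap: a type $I_2$ summand $z\mathfrak{M}$ of $\mathfrak{J}^{**}$ need not admit any normal pure state. Take $\mathfrak{J}=C([0,1],M_2(\mathbb{C}))$ and let $z$ be the central support of the normal functional $f\mapsto\int_0^1\mathrm{tr}\,f(t)\,dt$. Then $z\mathfrak{M}\cong L^\infty[0,1]\,\overline{\otimes}\,M_2(\mathbb{C})$, which has no minimal (central) projections and hence no weak$^*$-continuous homomorphism onto a factor. With a non-normal factor representation, the density argument of Step 2 no longer works.

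What you need instead is to use pure states of the quotient $z\circ\mathfrak{J}$. These are states of $\mathfrak{J}$, hence normal on $\mathfrak{M}$. They give weak$^*$-continuous maps of $\mathfrak{J}$ with weak$^*$-dense range in type I factors $c\,\mathfrak{M}$, though $c$ need not lie below $z$. You then have to show separately that one of these factors is a spin factor, rather than $\mathbb{C}$ or of type $I_n$ with $n\geq 3$. One way to do this:
\begin{itemize}
\item In a type $I_2$ algebra, associators have central square, so $z\mathfrak{M}$, and hence $z\circ\mathfrak{J}$, satisfies $[w,[x,y,u]^2,v]=0$, where $[\cdot,\cdot,\cdot]$ is the associator.
\item After linearization, this identity passes to those factors by separate weak$^*$-continuity of the product.
\item It fails in $H_3(\mathbb{R})$, which sits inside every factor of type $I_n$ with $n\geq 3$.
\item Not all the factors can be $\mathbb{C}$, because pure states separate points and $z\circ\mathfrak{J}$ is not associative.
\end{itemize}

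Finally, Step 2 is easier than you make it. Every spin factor, not only the finite-dimensional ones, is reflexive. So a norm-closed weak$^*$-dense subalgebra is the whole factor, and $\mathfrak{J}$ has a spin-factor quotient.
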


\begin{proof}
   Assume that $\mathfrak{M}$ has a direct summand of type $I_2$. By the classical structure theory of JBW$^*$-algebras, there exists a central projection $z \in \mathcal{P}(\mathfrak{M})$ such that $z \circ \mathfrak{M} \cong S_2(\mathbb{C})$. Let us consider the canonical projection $\pi: \mathfrak{M} \rightarrow z \circ \mathfrak{M}$, which is a surjective linear weak$^*$-continuous mapping. Since $\mathfrak{J}$ is weak$^*$-dense in $\mathfrak{M}$ by Goldstine's theorem, $\pi(\mathfrak{J})$ is also weak$^*$-dense in $z \circ \mathfrak{M}$.

Observe that $z \circ \mathfrak{M}$ is finite-dimensional, since it is isomorphic to $S_2(\mathbb{C})$. Hence, being weak$^*$-dense is equivalent to being dense with respect to the norm, which implies that $\pi(\mathfrak{J}) = z \circ \mathfrak{M}$. Therefore, by the first isomorphism theorem, we get that 
$$
\mathfrak{J}/\ker \pi \cong \pi(\mathfrak{J}) = z \circ \mathfrak{M} \cong S_2(\mathbb{C}),
$$
which is a contradiction.   
\end{proof}

Finally we are in conditions to present the main result of the present work. 

\begin{theorem}\label{Theo: 2.3}
    Let $\mathfrak{J}$ be a unital JB$^*$-algebra with no quotient isomorphic to $S_2(\mathbb{C})$. Let $\mu$ be a locally quasi-linear Jordan functional on $J$. Then $\mu$ is a bounded linear functional if and only if the restriction $\mu$ to the closed unit ball of $J$ is uniformly weakly continuous.
\end{theorem}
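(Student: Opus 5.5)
For the direction ``linear $\Rightarrow$ uniformly weakly continuous'', I would argue directly. If $\mu$ is linear and bounded on the closed unit ball $J_1$, then $\mu$ is a bounded real-linear functional on $J$, so $\mu\in J^*$. For $a,b\in J_1$ we have $|\mu(a)-\mu(b)|=|\mu(a-b)|$. Hence for every $\varepsilon>0$ the weak entourage
\[
\{(a,b): |\mu(a-b)|<\varepsilon\}
\]
controls the oscillation of $\mu$, and the restriction of $\mu$ to $J_1$ is uniformly $\sigma(J,J^*)$-continuous. This direction is routine.

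For the converse, assume $\mu|_{J_1}$ is uniformly weakly continuous. I would first apply \Cref{Prop: mu_bar_extension} to obtain the extension $\overline{\mu}: M=J^{**}\to\mathbb{R}$. It is homogeneous, bounded on $M_1$, uniformly weak$^*$-continuous on $M_1$, and a local quasi-linear Jordan functional on $M$. Next, by \Cref{lemma: no_isom_S2}, the hypothesis that $\mathfrak{J}$ has no quotient isomorphic to $S_2(\mathbb{C})$ gives that $\mathfrak{J}^{**}$ has no direct summand of type $I_2$. We know that $M$ is the self-adjoint part of $\mathfrak{J}^{**}$, a JBW-algebra. So the plan is to bring the Jordan Mackey--Gleason--Bunce--Wright theorem of \cite{EscolanoPeraltaVillena2025} to bear on $\overline{\mu}$.

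To do so, I would restrict $\overline{\mu}$ to the projection lattice $\mathcal{P}(M)$. For orthogonal projections $p,q$, the elements $p$, $q$ and $p+q$ all lie in the singly generated subalgebra $J(p+2q)$. Local quasi-linearity then gives $\overline{\mu}(p+q)=\overline{\mu}(p)+\overline{\mu}(q)$, so $\overline{\mu}$ is a finitely additive measure on $\mathcal{P}(M)$, bounded because $\overline{\mu}$ is bounded on $M_1$. By the MGBW theorem for JBW-algebras without type $I_2$ summands, there is a bounded linear functional $\psi$ on $M$ with $\psi(p)=\overline{\mu}(p)$ for every projection $p$.

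It remains to show $\overline{\mu}=\psi$ on all of $M$. For $x\in M$, the norm-closed subalgebra $J(x)$ is contained in the JBW-subalgebra $W(x)$, which is an associative JBW-algebra. By spectral theory, $x$ is a norm limit of finite real combinations of mutually orthogonal spectral projections $p_i\in W(x)$. Each such step function $s=\sum t_i p_i$ lies in a singly generated norm-closed subalgebra, namely $J(s)$. On $J(s)$, local quasi-linearity gives $\overline{\mu}(s)=\sum t_i\overline{\mu}(p_i)=\psi(s)$. To pass to the limit, I would use the fact that $\overline{\mu}$ is norm continuous on bounded sets: it is weak$^*$-continuous on $M_1$ and homogeneous, and the norm topology is finer than the weak$^*$ one. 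This yields $\overline{\mu}(x)=\psi(x)$. Hence $\overline{\mu}$ is linear on $M$, and restricting to $J$ shows that $\mu$ is a bounded linear functional.

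The main obstacle I expect is matching exactly the hypotheses of the MGBW theorem in \cite{EscolanoPeraltaVillena2025}. It may be stated for quasi-linear (rather than local quasi-linear) functionals, or for bounded finitely additive measures. If it requires full quasi-linearity, I would first check that the spectral-approximation argument above upgrades local quasi-linearity of $\overline{\mu}$ to linearity on each associative subalgebra $B$. One approach is to pass to the weak$^*$-closure of $B$ and approximate its elements by step functions in a common family of orthogonal projections. Then additivity on finite sums of those projections, plus norm continuity on bounded sets, gives linearity on $B$ before invoking the theorem.
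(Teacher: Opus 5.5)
Your proposal is correct and follows the paper's route. You extend $\mu$ to $\overline{\mu}$ on $M=J^{**}$ via Proposition~\ref{Prop: mu_bar_extension}, exclude type $I_2$ summands with Lemma~\ref{lemma: no_isom_S2}, obtain finite additivity on $\mathcal{P}(M)$, apply the Jordan MGBW theorem to get $\psi$, and identify $\overline{\mu}=\psi$ by spectral approximation; the differences are minor. You prove the easy direction directly rather than citing Bunce--Wright, use the simpler element $p+2q$ for additivity, and spell out the final limit step (norm continuity of $\overline{\mu}$ on bounded sets, from weak$^*$-continuity on $M_1$ plus homogeneity), which the paper delegates to a reference.
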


\begin{proof}
    By \cite[Lemma 1.1]{BunceWright1996}, each bounded linear functional on $J$ is uniformly weakly continuous. We now assume that $\mu$ is uniformly weakly continuous in $J_1$.\smallskip

    By Proposition \ref{Prop: mu_bar_extension}, there exists a unique extension of $\mu$, $\overline{\mu}: M \rightarrow \mathbb{R}$, where $M = J^{**}$, such that $\overline{\mu}$ is a homogeneous, bounded local quasi-linear Jordan functional on $M$. 
    Now let us take $p_1, p_2, \dots p_n$ orthogonal projections in the JBW-algebra $M$. We define 
    $$x = p_1 + \frac{1}{2}p_2 + \dots + \frac{1}{2^{n-1}}p_n + \frac{1}{2^n}(\unit-p_1-p_2-\dots -p_n).$$
    Then, $\{x^k\}_{k \in \mathbb{N}}$, converges in norm to $p_1$. So $p_1 \in J(x)$. Then, $\{(2x-2p_1)^{k}\}_{k \in \mathbb{N}}$ converges in norm to $p_2$. Similarly, $p_3, p_4, \dots, p_n$ and $\unit - p_1 -p_2 - \dots -p_n$ are all in $J(x)$. Hence, $\overline{\mu}$ is finitely additive. 

    Let us define the mapping $\psi : \mathcal{P}(M) \rightarrow \mathbb{R}$ by 
    $\psi(p) = \overline{\mu}(p)$. Then, by construction, $\psi$ is a bounded finitely additive measure on the projections of  the JBW-algebra $M$. By hypothesis $\mathfrak{J}$ has no quotients isomorphic to $S_2(\mathbb{C})$. Thus, by Lemma \ref{lemma: no_isom_S2}, $\mathfrak{M}=\mathfrak{J}^{**}$ has no direct summands of type $I_2$, so that $M$ has no direct summands of type $I_2$ (c.f. \cite{Edwards1980}). Hence, by the Mackey-Gleason-Bunce-Wright theorem in \cite{EscolanoPeraltaVillena2025}, $\psi$ extends to a bounded linear functional on $\mathfrak{M}$ (we will denote this extension again by $\psi$), such that coincides with $\overline{\mu}$ in  $\mathcal{P}(M)$.
    
     By \cite[Proposition 4.2.3]{HOS}, given  an element $x \in M$, for each positive $\varepsilon$ we can find mutually orthogonal projections $p_1, p_2 \dots , p_m \in W(x)$, the JBW-algebra generated by $x$ and $\unit$, and real numbers $\alpha_1, \dots \alpha_m$ such that 
    $$ \left\| x - \sum_{j=1}^m \alpha_j p_j \right\|< \frac{\varepsilon}{2 \hbox{ max}\{\|\bar{\mu}\|, \|\psi\|\}}. 
    $$
    From the previous paragraph, $\overline{\mu}$ and the extension $\psi$ coincide on finite real linear combinations of orthogonal projections, which implies that 
    $$ \overline{\mu}\left( \sum_{j=1}^m \alpha_jp_j\right) = \sum_{j=1}^{m}\alpha_j\overline{\mu}(p_j) = \sum_{j=1}^{m}\alpha_j\psi(p_j) = \psi\left( \sum_{j=1}^m \alpha_jp_j\right).$$
    Following the same computations in \cite[Theorem 3.4]{EscolanoHamhalterPeraltaVillena2026} we get that  $\overline{\mu}(x) = \psi(x)$ for all $x \in M$. Thus, $\mu$ is linear and bounded as we wanted to prove. 
\end{proof}

As a consequence, the extension $\overline{\mu}: \mathfrak{J} \rightarrow \mathbb{C}$ given by $\overline{\mu}(x +iy) = \mu(x) + i \mu(y)$ is unique when we impose that the restriction of $\mu$ to the closed unit ball of $J$ is uniformly weakly continuous.

\medskip\medskip

\textbf{Acknowledgements}\medskip
 
Supported by grant FPU21/00617 at University of Granada founded by Ministerio de Universidades (Spain), 
and by the IMAG--Mar{\'i}a de Maeztu grant CEX2020-001105-M/AEI/10.13039/ 501100 011033

\smallskip\smallskip

\noindent\textbf{Data Availability} Statement Data sharing is not applicable to this article as no datasets were generated or analyzed during the preparation of the paper.\smallskip\smallskip

\noindent\textbf{Declarations}
\smallskip\smallskip

\noindent\textbf{Conflict of interest} The authors declare that he has no conflict of interest.

\end{document}